%
\documentclass[a4paper,10pt]{article}
\usepackage{amssymb,amsmath,amsthm}
\usepackage{fullpage}
\usepackage{hyperref}
\usepackage{eucal}
\usepackage{color}
\usepackage{stackrel}
\usepackage{graphicx}
\newcommand{\ie}{\emph{i.e.}}
\newcommand{\eg}{\emph{e.g.}}
\newcommand{\cf}{\emph{cf.}}
\newcommand{\Real}{\mathbb{R}}

\newcommand{\Nat}{\mathbb{N}}
\newcommand{\Int}{\mathbb{Z}}
\newcommand{\Sphere}{\mathbb{S}}

\newcommand{\Dom}{\mathsf{D}}
\newcommand{\dist}{\mathop{\mathrm{dist}}\nolimits}
\newcommand{\rot}{\mathop{\mathrm{rot}}\nolimits}
\newcommand{\vertiii}[1]{{\left\vert\kern-0.25ex\left\vert\kern-0.25ex\left\vert #1 
    \right\vert\kern-0.25ex\right\vert\kern-0.25ex\right\vert}}

\newcommand{\sii}{L^2}

\newcommand{\der}{\mathrm{d}}
\def\OMIT#1{}
\newtheorem{Theorem}{Theorem}

\newtheorem{Proposition}{Proposition}

\theoremstyle{definition}
\newtheorem{Remark}{Remark}

%
%
\usepackage[normalem]{ulem}
\definecolor{DarkGreen}{rgb}{0,0.5,0.1} 

\newcommand\soutD{\bgroup\markoverwith
{\textcolor{DarkGreen}{\rule[.5ex]{2pt}{1pt}}}\ULon}
\newcommand\soutP{\bgroup\markoverwith
{\textcolor{blue}{\rule[.5ex]{2pt}{1pt}}}\ULon}
\newcommand{\Hm}[1]{\leavevmode{\marginpar{\tiny%
$\hbox to 0mm{\hspace*{-0.5mm}$\leftarrow$\hss}%
\vcenter{\vrule depth 0.1mm height 0.1mm width \the\marginparwidth}%
\hbox to
0mm{\hss$\rightarrow$\hspace*{-0.5mm}}$\\\relax\raggedright #1}}}

\begin{document}
%
\title{\textbf{\LARGE
On the improvement of the Hardy inequality 
due to singular magnetic fields
}}
\author{Luca Fanelli,$^a$ \
David Krej\v{c}i\v{r}{\'\i}k,$^b$ \
Ari Laptev$^c$ \ and \ 
Luis Vega$^d$}
\date{\small 
\begin{quote}
\emph{
\begin{itemize}
\item[$a)$] 
Dipartimento di Matematica, SAPIENZA Universit\`a di Roma,
P.~le Aldo Moro 5, 00185 Roma;
fanelli@mat.uniroma1.it.%
\item[$b)$] 
Department of Mathematics, Faculty of Nuclear Sciences and 
Physical Engineering, Czech Technical University in Prague, 
Trojanova 13, 12000 Prague 2, Czechia;
david.krejcirik@fjfi.cvut.cz.%
\\
\item[$c)$]
Department of Mathematics, Imperial College London, 
Huxley Building, 180 Queen’s Gate, London SW7 2AZ, United Kingdom;
a.laptev@imperial.ac.uk%
\\
\item[$d)$]
Departamento de Matem\'aticas, Universidad del Pais Vasco, 
Aptdo.~644, 48080 Bilbao, \&
Basque Center for Applied Mathematics (BCAM), 
Alameda Mazarredo 14, 48009 Bilbao, Spain; 
luis.vega@ehu.es \& lvega@bcamath.org.
\end{itemize}
}
\end{quote}
11 July 2018}
\maketitle
\begin{abstract}
\noindent
We establish magnetic improvements upon the classical Hardy inequality
for two specific choices of singular magnetic fields.
First, we consider the Aharonov-Bohm field in all dimensions
and establish a sharp Hardy-type inequality that takes into
account both the dimensional as well as the magnetic flux contributions.
Second, in the three-dimensional Euclidean space,
we derive a non-trivial magnetic Hardy inequality 
for a magnetic field that vanishes at infinity and diverges along a plane.
\end{abstract}
%
%
%

\section{Introduction}
%
The subcriticality of the Laplacian in $\Real^d$ for $d \geq 3$
can be quantified by means of the classical Hardy inequality 
\begin{equation}\label{Hardy}
  -\Delta \geq \left(\frac{d-2}{2}\right)^2 \frac{1}{r^2}
\end{equation}
valid in the sense of quadratic forms in $\sii(\Real^d)$,
where~$-\Delta$ is the standard self-adjoint realisation of 
the Laplacian in $\sii(\Real^d)$
and~$r$ is the distance to the origin of~$\Real^d$.
On the other hand, the Laplacian is critical in~$\Real$ and~$\Real^2$
in the sense that the spectrum of the shifted operator $-\Delta + V$ 
starts below zero whenever the operator of multiplication~$V$
is bounded, non-positive and non-trivial. 
In quantum mechanics, 
interpreting~$-\Delta$ as the Hamiltonian of a free electron,
the Hardy inequality~\eqref{Hardy} 
can be interpreted as the uncertainty principle
with important consequences for
the stability of atoms and molecules.

Inequality~\eqref{Hardy} goes back to 1920 \cite{Hardy_1920}
and it is well known that it is optimal in the sense
that the dimensional constant is the best possible 
and no other non-negative term could be added 
on the right-hand side of~\eqref{Hardy}. 
A much more recent observation is that adding any magnetic field 
leads to an improved Hardy inequality, including dimension $d=2$.
A variant of this statement is the magnetic Hardy inequality
\begin{equation}\label{Hardy.magnet}
  (-i\nabla+A)^2 
  -\left(\frac{d-2}{2}\right)^2 \frac{1}{r^2}
  \geq \frac{c_{d,B}}{1+r^2\log^2(r)}
  \,,
\end{equation}
valid in the sense of quadratic forms in $\sii(\Real^d)$ for $d \geq 2$.
Here $A:\Real^d\to\Real^d$ is a smooth vector potential
and~$c_{d,B}$ is a non-negative constant that depends only
on the dimension~$d$ and the magnetic field $B=\der A$;
the constant~$c_{d,B}$ is positive if, and only if, 
the field~$B$ is not identically equal to zero.
This inequality was first proved by Laptev and Weidl in 1999
\cite{Laptev-Weidl_1999} for $d=2$ under an extra flux condition,
in which case the lower bound holds 
with a better weight (without the logarithm)
on the right-hand side of~\eqref{Hardy.magnet}.
A general version of~\eqref{Hardy.magnet} is due 
to Cazacu and Krej\v{c}i\v{r}{\'\i}k~\cite{CK},
but we also refer to~\cite{Weidl_1999},
\cite{Alziary-Fleckinger-Pelle-Takac_2003},
\cite{Balinsky-Laptev-Sobolev_2004}, 
\cite[Sec.~6]{Kovarik_2011}
and~\cite{Ekholm-Portmann_2014} for previous related works.

The principal motivation of the present paper is
our curiosity about the structure of the magnetic improvement 
on the right-hand side of~\eqref{Hardy.magnet}.
Our first result deals with the Aharonov-Bohm potential
\begin{equation}\label{AB}
  A_\alpha(x,y,z_1,\dots,z_{d-2}) 
  := \alpha \left(\frac{-y}{x^2+y^2},\frac{x}{x^2+y^2},0,\dots,0\right)
  , \qquad
  \alpha \in\Real \,,
\end{equation}
where $(x,y,z_1,\dots,z_{d-2}) \in \Real^d$.
We abbreviate $z:=(z_1,\dots,z_{d-2}) \in \Real^{d-2}$
and denote by $\rho(x,y,z) := \sqrt{x^2+y^2}$
the distance of a point $(x,y,z) \in \Real^d$
to the subspace $\{x=y=0\} \subset \Real^d$ of dimension~$d-2$. 
Let us also recall that $r(x,y,z) := \sqrt{x^2+y^2+|z|^2}$
denotes the distance of $(x,y,z) \in \Real^d$
to the origin of~$\Real^d$.
Because of the singularity of~$A_\alpha$ at the origin,
it is important to specify the self-adjoint realisation
of the associated magnetic Laplacian;
we customarily understand $(-i\nabla+A_\alpha)^2$ as the Friedrichs 
extension of this operator initially defined 
on $C_0^\infty(\Real^d\setminus\{\rho=0\})$.
\begin{Theorem}\label{Thm.Ari}
Let~$A_\alpha$ be given by~\eqref{AB}.
For every $\alpha \in \Real$, one has
\begin{equation}\label{main.Ari} 
  (-i\nabla+A_\alpha)^2 
  - \left(\frac{d-2}{2}\right)^2 \frac{1}{r^2}
  \geq \frac{\dist(\alpha,\Int)^2}{\rho^2}
\end{equation}
in the sense of quadratic forms in $\sii(\Real^d)$ with $d \geq 2$.
\end{Theorem}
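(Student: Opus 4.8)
The plan is to pass to cylindrical coordinates adapted to the singular axis $\{\rho=0\}$ and to separate the angular variable, which diagonalises the flux and the dimensional contributions at once. Since the operator is defined as the Friedrichs extension of its restriction to $C_0^\infty(\Real^d\setminus\{\rho=0\})$, it suffices to prove the form inequality on this core and then pass to the form closure. Writing a point as $(\rho,\theta,z)$ with $x=\rho\cos\theta$, $y=\rho\sin\theta$ and $z\in\Real^{d-2}$, the potential becomes $A_\alpha=(\alpha/\rho)\,\hat\theta$, so that for $\psi\in C_0^\infty(\Real^d\setminus\{\rho=0\})$ the magnetic energy takes the separated form
\begin{equation*}
  \int_{\Real^d}|(-i\nabla+A_\alpha)\psi|^2
  = \int\!\left(|\partial_\rho\psi|^2
  + \frac{1}{\rho^2}\,|{-i}\partial_\theta\psi+\alpha\psi|^2
  + |\nabla_z\psi|^2\right)\rho\,\der\rho\,\der\theta\,\der z .
\end{equation*}

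First I would expand $\psi(\rho,\theta,z)=\sum_{m\in\Int}\psi_m(\rho,z)\,e^{im\theta}$ in a Fourier series in~$\theta$. By orthogonality of the modes $e^{im\theta}$, all three integrals above, as well as the two weighted $\sii$-norms appearing in~\eqref{main.Ari} (whose weights $\rho^{-2}$ and $r^{-2}=(\rho^2+|z|^2)^{-1}$ are $\theta$-independent), decompose into sums over~$m$. Since $-i\partial_\theta$ acts as multiplication by~$m$ on the $m$-th mode, it therefore suffices to establish, for each fixed $m\in\Int$, the one-mode inequality
\begin{equation*}
  \int\!\left(|\partial_\rho\psi_m|^2+|\nabla_z\psi_m|^2
  +\frac{(m+\alpha)^2}{\rho^2}\,|\psi_m|^2\right)\rho\,\der\rho\,\der z
  \geq \int\!\left(\Big(\tfrac{d-2}{2}\Big)^2\frac{1}{\rho^2+|z|^2}
  +\frac{\dist(\alpha,\Int)^2}{\rho^2}\right)|\psi_m|^2\,\rho\,\der\rho\,\der z .
\end{equation*}

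The angular contribution is handled by the elementary bound $(m+\alpha)^2\geq\dist(\alpha,\Int)^2$, valid for every $m\in\Int$ because $\min_{m\in\Int}|m+\alpha|=\dist(\alpha,\Int)$; this produces the flux term $\dist(\alpha,\Int)^2/\rho^2$ exactly, and it is the only place where the hypothesis on~$\alpha$ enters. It then remains to extract the dimensional Hardy term from the purely radial-longitudinal energy, i.e.\ to show
\begin{equation*}
  \int\!\left(|\partial_\rho\psi_m|^2+|\nabla_z\psi_m|^2\right)\rho\,\der\rho\,\der z
  \geq \Big(\tfrac{d-2}{2}\Big)^2
  \int\frac{|\psi_m|^2}{\rho^2+|z|^2}\,\rho\,\der\rho\,\der z .
\end{equation*}

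The key observation here is that the left-hand side equals, up to the factor~$2\pi$, the Dirichlet energy $\int_{\Real^d}|\nabla\Psi_m|^2$ of the axially symmetric extension $\Psi_m(x,y,z):=\psi_m(\rho,z)$, which carries no angular derivative, while the right-hand side equals $(2\pi)^{-1}\big(\tfrac{d-2}{2}\big)^2\int_{\Real^d}|\Psi_m|^2/r^2$. As $\psi_m$ is smooth and compactly supported away from the axis, $\Psi_m\in H^1(\Real^d)$, and the estimate is precisely the classical Hardy inequality~\eqref{Hardy} applied to~$\Psi_m$. I expect this last step to be the conceptual crux: it reveals that the dimensional constant $\big(\tfrac{d-2}{2}\big)^2$ is generated entirely by the radial and $z$-directions, and is therefore decoupled from—and additive with—the flux contribution residing in the angular modes, which is exactly what lets the two lower bounds combine without loss. (For $d=2$ there is no $z$-variable and the dimensional term vanishes, so this step is vacuous.) Summing the mode-wise inequalities over~$m$ and passing to the form closure then completes the proof.
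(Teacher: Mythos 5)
Your proposal is correct, and its skeleton coincides with the paper's: pass to cylindrical coordinates adapted to $\{\rho=0\}$, extract the flux term from the angular energy via $\min_{m\in\Int}(m+\alpha)^2=\dist(\alpha,\Int)^2$, and extract the dimensional term from the remaining radial--longitudinal energy. The one genuine difference is how that last step is carried out. The paper proves the inequality
\begin{equation*}
  \int_{\Real^{d-2}}\int_0^\infty\left(|\partial_\rho\phi|^2+|\nabla_{\!z}\phi|^2\right)\rho\,\der\rho\,\der z
  \;\geq\;\left(\frac{d-2}{2}\right)^2\int_{\Real^{d-2}}\int_0^\infty\frac{|\phi|^2}{\rho^2+|z|^2}\,\rho\,\der\rho\,\der z
\end{equation*}
from scratch by the ground-state substitution $\phi=f\,(\rho^2+|z|^2)^{-(d-2)/4}$ followed by an integration by parts (this is \eqref{radial.estimate}), whereas you obtain it by recognising the left-hand side as $(2\pi)^{-1}$ times the full Dirichlet energy of the axisymmetric extension $\Psi_m(x,y,z):=\psi_m(\rho,z)$ and quoting the classical Hardy inequality~\eqref{Hardy}. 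Your reduction is valid: since $\supp\psi$ is a compact subset of $\Real^d\setminus\{\rho=0\}$, each $\psi_m$ vanishes for small $\rho$, so $\Psi_m$ is smooth and compactly supported and \eqref{Hardy} applies for $d\geq3$, while the step is vacuous for $d=2$. It is arguably the more conceptual route, as it makes transparent the paper's own remark that the dimensional constant is generated jointly by the radial and $z$-directions and is additive with the angular flux contribution. What the paper's explicit substitution buys instead is a self-contained argument whose nonnegative remainder term is then reused to discuss the sharpness of \eqref{main.Ari}; your black-box appeal to \eqref{Hardy} would not directly yield that optimality discussion, but it is not needed for the theorem as stated.
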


This theorem in dimension $d=2$ is due to 
Laptev and Weidl \cite{Laptev-Weidl_1999}.
The novelty of Theorem~\ref{Thm.Ari} consists
in the present extension to the higher dimensions, $d \geq 3$.
The result is optimal in the sense that
that the constants appearing in~\eqref{main.Ari}
are the best possible 
and no other non-negative term could be added 
on the right-hand side of the inequality. 
In other words, subtracting the right-hand side 
from the left-hand side, the obtained operator would be critical.
Notice also that the flux-type condition $\alpha \not\in \Int$ 
is necessary to have the subcriticality
of the operator on the left-hand side of~\eqref{main.Ari}.
Indeed, $(-i\nabla+A_\alpha)^2$ is unitarily equivalent
to the magnetic-free Laplacian whenever $\alpha \in \Int$, 
so in this case the criticality of the operator 
on the left-hand side of~\eqref{main.Ari} follows
from the optimality of the classical Hardy inequality~\eqref{Hardy}.

Because of the special form of the vector potential~\eqref{AB},
the operator $(-i\nabla+A_\alpha)^2$ 
admits a natural decomposition with respect to the variables
$(x,y) \in \Real^2$ and $z \in \Real^{d-2}$.	
However, it is important to stress that~\eqref{main.Ari}
does not follow as a result of this separation of variables.
In fact, while the right-hand side of~\eqref{main.Ari}
is the two-dimensional contribution coming from the angular
component of the magnetic Laplacian in the $(x,y)$-plane,
the second (dimensional) term on the left-hand side 
of the inequality is a contribution coming from 
both the radial component of 
the magnetic Laplacian in the $(x,y)$-plane
as well as the Laplacian in the $z$-space. 

The feature of the Aharonov-Bohm potential~\eqref{AB}
is that its singularity is supported 
on a manifold of codimension two.
Our next interest lies in a vector potential
with a singularity supported on a hyperplane. 
In this case, we restrict our attention 
to the the three-dimensional toy model 
\begin{equation}\label{model}
  A_\beta(x,y,z) := \beta \left(\frac{y}{z^2},0,0\right)
  , \qquad
  \beta \in\Real \,,
\end{equation}
where $(x,y,z) \in \Real^3$.
The seemingly simple choice~\eqref{model} for the vector potential
is of course very special, but on the other hand 
the model is intrinsically three-dimensional 
in the sense that no reduction to lower dimensions 
via a separation of variables is available.

We understand the magnetic Laplacian
$-\Delta_\beta := (-i\nabla+A_\beta)^2$
corresponding to~\eqref{model} with $\beta \in\Real\setminus\{0\}$
as the Friedrichs extension of this operator initially defined 
on $C_0^\infty(\Real^d\setminus\{z=0\})$.
Because of the strong singularity of~$A_\beta$ on the plane $\{z=0\}$,
the unperturbed version of~$-\Delta_\beta$  
is not the Laplacian~$-\Delta$ in~$\Real^3$
but rather the Dirichlet Laplacian in $\Real^3 \setminus \{z=0\}$
that we denote by~$-\Delta_0$.
More specifically, the singularity of the potential requires 
that the functions from the operator domain 
of~$-\Delta_\beta$ vanish on $\{z=0\}$,
representing thus certain confinement of the electron
to one of the two half-spaces $\{z>0\}$ or $\{z<0\}$.
The unperturbed operator~$-\Delta_0$ satisfies the Hardy inequality
\begin{equation}\label{Hardy.zero} 
  -\Delta_0 \geq \frac{1}{4}\frac{1}{z^2}
  \,,
\end{equation}
which is optimal 
(in the same way as~\eqref{Hardy} is optimal for~$-\Delta$).
Notice that the distance to the origin in~\eqref{Hardy}
is replaced by the distance to the plane $\{z=0\}$ in~\eqref{Hardy.zero}.
Our next result shows that there is always a specific improvement
whenever $\beta \not= 0$. 

\begin{Theorem}\label{Thm.main}
Let~$A_\beta$ be given by~\eqref{model}.
For every $\beta \in \Real$,
one has
\begin{equation}\label{main} 
  -\Delta_\beta  
  - \frac{1}{4}\frac{1}{z^2}
  \geq |\beta| \left(
  1 + \frac{y^2}{z^2}
  \right) 
  \frac{1}{z^2} 
\end{equation}
in the sense of quadratic forms in $\sii(\Real^3)$.
\end{Theorem}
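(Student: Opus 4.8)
\emph{Approach and reductions.} The plan is to exploit that $A_\beta$ is independent of~$x$ and that~\eqref{main} is invariant under the dilations $(x,y,z)\mapsto(\lambda x,\lambda y,\lambda z)$. The case $\beta=0$ is immediate, the right-hand side then vanishing and the claim reducing to~\eqref{Hardy.zero}. For $\beta\neq0$ I would Fourier-transform in~$x$; since $-i\partial_x+\beta y/z^2\mapsto\xi+\beta y/z^2$, the operator $-\Delta_\beta$ becomes the direct integral over $\xi\in\Real$ of the two-dimensional fibre operators $H_\xi:=-\partial_y^2-\partial_z^2+(\xi+\beta y/z^2)^2$, acting in $\sii$ of each half-plane $\{z>0\}$, $\{z<0\}$ with a Dirichlet condition on $\{z=0\}$. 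Thus~\eqref{main} is equivalent to the family $H_\xi\geq V$, $\xi\in\Real$, with $V:=\frac{1}{4z^2}+\frac{|\beta|}{z^2}+\frac{|\beta|y^2}{z^4}$. The dilation $(y,z)\mapsto(\nu y,\nu z)$ conjugates $H_\xi$ into $\nu^{-2}H_{\nu\xi}$ and $V$ into $\nu^{-2}V$; combined with the reflection $y\mapsto-y$ (which sends $\beta$ to $-\beta$) and with $(\xi,\beta)\mapsto(-\xi,-\beta)$ (which leaves $H_\xi$ and~$V$ unchanged), this lets me assume $\beta>0$ and reduces the continuum of fibres to the two representatives $\xi=0$ and $\xi=1$.

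\emph{Peeling off the two classical terms.} Next I would remove the two explicit terms on the right of~\eqref{main} by exact factorisations at the level of each fibre. In the $z$ variable the Dirichlet Hardy inequality~\eqref{Hardy.zero} is the operator identity $-\partial_z^2-\frac{1}{4z^2}=\bigl(\partial_z-\tfrac{\sgn z}{2z}\bigr)^{*}\bigl(\partial_z-\tfrac{\sgn z}{2z}\bigr)$, which accounts for the term $\frac{1}{4z^2}$. In the $y$ variable I would use the factorisation underlying the Laptev--Weidl bound: because $[-i\partial_x+\beta y/z^2,-i\partial_y]=i\beta/z^2$ keeps a constant sign on each half-space, the first-order operator $\tilde R:=(\xi+\beta y/z^2)+\sgn(\beta)\,\partial_y$ satisfies the identity $-\partial_y^2+(\xi+\beta y/z^2)^2=\tilde R^{*}\tilde R+\frac{|\beta|}{z^2}$, which accounts for the term $\frac{|\beta|}{z^2}$. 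Subtracting both, the fibre inequality $H_\xi\geq V$ becomes equivalent to the single core inequality
\begin{equation*}
  \tilde R^{*}\tilde R
  +\Bigl(\partial_z-\tfrac{\sgn z}{2z}\Bigr)^{\!*}\Bigl(\partial_z-\tfrac{\sgn z}{2z}\Bigr)
  \;\geq\;\frac{|\beta|\,y^{2}}{z^{4}}
\end{equation*}
on each half-plane, so that the whole magnetic improvement beyond the two classical terms is encoded in the requirement that these two manifestly nonnegative, intrinsically two-dimensional operators jointly dominate the weight $|\beta|y^2/z^4$.

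\emph{The crux and the main obstacle.} The scale-invariant fibre $\xi=0$ is the model case. In polar coordinates $y=r\sin\varphi$, $z=r\cos\varphi$ on $\{z>0\}$ the operator $H_0-V$ takes the form $-\Delta+r^{-2}U(\varphi)$ with the explicit angular potential $U(\varphi)=(\beta^2-|\beta|)\sec^4\varphi-(\beta^2+\tfrac14)\sec^2\varphi$, so that, by separation of variables, the positivity of $H_0-V$ is equivalent to the nonnegativity of the bottom of the Dirichlet spectrum of the one-dimensional operator $-\partial_\varphi^2+U$ on $(-\tfrac\pi2,\tfrac\pi2)$, which I would try to certify by exhibiting a nodeless zero-energy solution (thereby also confirming the optimality announced in the introduction). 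The main obstacle is the behaviour at the plane $\{z=0\}$: the potential $(\xi+\beta y/z^2)^2$ is of order $z^{-4}$ there, so the core inequality is borderline and the argument must use the Dirichlet confinement quantitatively rather than through the soft Hardy bound alone — this is exactly where the strength $(\beta^2-|\beta|)$ of the quartic endpoint singularity $\sec^4\varphi$, whose sign switches at $|\beta|=1$, becomes decisive. The harder half is the non-scale-invariant fibre $\xi=1$, for which the angular separation is unavailable; here I would instead construct a positive supersolution $\Psi_\xi$ of $(H_\xi-V)\Psi_\xi\geq0$ adapted to the shifted oscillator centre $a=-\xi z^2/\beta$, the difficulty being that the naive product of the $z$-Hardy ground state $|z|^{1/2}$ with the $y$-oscillator ground state fails to be a supersolution, so a genuinely coupled two-dimensional choice, controlled uniformly down to $z=0$, is required.
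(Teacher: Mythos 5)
You reproduce the paper's opening moves faithfully: the partial Fourier transform in $x$ that reduces \eqref{main} to the family of fibre inequalities $H_\xi\geq V$, and the removal of the two terms $\tfrac{1}{4z^2}$ and $\tfrac{|\beta|}{z^2}$ by exact ground--state factorisations in $z$ and $y$ — this is precisely what the paper does with the successive substitutions $\hat\psi=\sqrt{|z|}\,\phi$ and $\phi=\eta\varphi$ leading to \eqref{form3}. But everything after that point, i.e.\ the part that actually produces the term $|\beta|y^2/z^4$, is left as a programme rather than a proof: for $\xi=0$ you say you \emph{would try} to certify nonnegativity of the angular operator by exhibiting a nodeless zero-energy solution, and for $\xi\neq0$ you concede that the natural product supersolution fails and that ``a genuinely coupled two-dimensional choice'' is required, without constructing one. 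The paper's corresponding step is the identity \eqref{form4} with the potential \eqref{potential} and the choice \eqref{choice}; nothing in your proposal replaces it. So there is a genuine gap: the crux is announced, not executed.

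Worse, your own (correct) computation of the angular potential $U(\varphi)=(\beta^2-|\beta|)\sec^4\varphi-(\beta^2+\tfrac14)\sec^2\varphi$ shows that the programme cannot succeed for $0<|\beta|<1$: then $\beta^2-|\beta|<0$, so $U$ diverges to $-\infty$ like $-(|\beta|-\beta^2)\,(\tfrac{\pi}{2}\mp\varphi)^{-4}$ at the endpoints, the operator $-\partial_\varphi^2+U$ on $(-\tfrac{\pi}{2},\tfrac{\pi}{2})$ is unbounded below, and no nonnegative bottom eigenvalue (hence no nodeless zero-energy solution, hence no positive supersolution) can exist. Concretely, for a test function $\hat\psi$ supported in $\{|\xi|\leq 1,\ 1\leq y\leq 2,\ \eps/2\leq z\leq 3\eps/2\}$ one finds
\begin{equation*}
  Q_\beta[\psi]-\int_{\Real^3}V\,|\hat\psi|^2
  \;\leq\; C\,\eps^{-2}-c\,(|\beta|-\beta^2)\,\eps^{-4}
  \;\xrightarrow[\eps\to0]{}\;-\infty ,
\end{equation*}
because the available potential $(\xi+\beta y/z^2)^2=O(\beta^2y^2/z^4)$ on that set is strictly smaller than the required $|\beta|y^2/z^4$. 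This is not a defect you introduced; it signals that the target inequality \eqref{main} itself must be re-examined for small $|\beta|$ (note that in the paper's expansion of $|\partial_z\phi|^2$ a term $-f^2$ is lost between the displayed identity and \eqref{potential}, so that with the choice \eqref{choice} one actually gets $V_f=\frac{|\beta|}{z^4}(y^2+y_0^2)-\frac{\beta^2}{4z^6}(y^2-y_0^2)^2$ rather than \eqref{both}). In any case, as submitted your argument does not establish \eqref{main}.
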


The first term on the right-hand side of~\eqref{main} is easy to obtain, 
while the second term is much less obvious.
Our strategy to identify the second improvement
is based on a test-function argument,
which we believe to be of independent interest.	
Contrary to Theorem~\ref{Thm.Ari},
we do not know whether the inequality of Theorem~\ref{Thm.main}
is optimal if $\beta \not= 0$. 

The rest of the paper naturally splits 
into two independent sections.
In Section~\ref{Sec.AB} we quickly prove Theorem~\ref{Thm.Ari},
while Theorem~\ref{Thm.main} is established 
in a longer Section~\ref{Sec.toy}.

\section{The Aharonov-Bohm field}\label{Sec.AB}
%
In this section we exclusively consider
the vector potential~$A_\alpha$ from~\eqref{AB}
with any $d \geq 2$. 

\subsection{Preliminaries}
For every $\alpha \in \Real$,
we introduce the magnetic Laplacian~$(-i\nabla+A_\alpha)^2$
as the self-adjoint non-negative operator in $\sii(\Real^d)$
associated with the quadratic form
\begin{equation*}
  Q_\alpha[\psi] := \|(\nabla+iA_\alpha)\psi\|^2 
  \,, \qquad
  \Dom(Q_\alpha) := 
  \overline{C_0^\infty(\Real^d\setminus\{\rho=0\})}^{\vertiii{\cdot}}
  \,,
\end{equation*}
where~$\|\cdot\|$ denotes the usual norm of $\sii(\Real^d)$ and
\begin{equation*}
  \vertiii{\psi}
  := \sqrt{\displaystyle \|(\nabla+iA_\alpha)\psi\|^2 + \|\psi\|^2 } 
  \,.
\end{equation*}
By the diamagnetic inequality, we have the form domain inclusion
$
  \Dom(Q_\alpha) 
  \subset W_0^{1,2}(\Real^d \setminus \{\rho=0\})
  = W^{1,2}(\Real^d)
$,
where the equality follows from the fact that 
the subset $\{\rho=0\} \subset \Real^d$ is a polar set
(\cf~\cite[Sec.~VIII.6]{Edmunds-Evans}).
Using the special structure~\eqref{model} of the potential~$A_\beta$, 
we have
\begin{equation*}
  Q_\alpha[\psi] =
  \int_{\Real^d}  
  \left(
  \left|\left(-i\partial_x 
  - \frac{\alpha \, y}{x^2+y^2} \right)\psi\right|^2
  + \left|\left(-i\partial_y 
  + \frac{\alpha \, x}{x^2+y^2} \right)\psi\right|^2
  + |\nabla_{\!z} \psi|^2  
  \right)
  \der x \, \der y \, \der z
  \,,
\end{equation*}
where $z = (z_1,\dots,z_{d-2})$
is a $(d-2)$-dimensional coordinate. 
In the sense of distributions, 
\begin{equation*} 
  (-i\nabla+A_\alpha)^2  = 
  \Big(-i\partial_x - \frac{\alpha \, y}{x^2+y^2} \Big)^2 
  + \Big(-i\partial_y + \frac{\alpha \, x}{x^2+y^2} \Big)^2
  -\Delta_z  
  \,,
\end{equation*}
where $-\Delta_z$ is the usual (distributional) Laplacian 
in the~$z$ variables.

If $\alpha=0$, then 
$
  \Dom(Q_0) = W^{1,2}(\Real^d)
$
and the operator associated with~$Q_0$
is just the standard self-adjoint realisation
of the Laplacian~$-\Delta$ in~$\sii(\Real^d)$.
More generally, if $\alpha \in \Int$ then $(-i\nabla+A_\alpha)^2$
is unitarily equivalent to the (magnetic-free) Laplacian $-\Delta$.
This can be seen as follows. 
Passing to the polar coordinates in the $(x,y)$-plane,
\ie\ writing $(x,y) = (\rho\cos\varphi,\rho\sin\varphi)$
with $\rho \in (0,\infty)$ and $\varphi \in (0,2\pi]$,
we have the obvious unitary equivalences
\begin{equation}\label{partial.wave} 
\begin{aligned}
  (-i\nabla+A_\alpha)^2  
  &\cong 
  - \rho^{-1} \, \partial_\rho \, \rho \, \partial_\rho
  + \frac{(-i\partial_\varphi + \alpha)^2}{\rho^2}  
  -\Delta_z  
  \\
  &\cong
  \bigoplus_{m\in\Int}
  \left(
  - \rho^{-1} \, \partial_\rho \, \rho \, \partial_\rho
  + \frac{\nu_m^2}{\rho^2} 
  \right)
  -\Delta_z  
  \,.
\end{aligned}
\end{equation}
Here $\nu_m := m+\alpha$ are the eigenvalues 
of the one-dimensional operator $-i\partial_\varphi + \alpha$
in $\sii([0,2\pi))$, subject to periodic boundary conditions.
The corresponding set of eigenfunctions read 
$\{e^{im\varphi}\}_{m\in\Int}$.
If~$\alpha$ is an integer, then the direct sum is indistinguishable  
from the usual partial-wave decomposition of the Laplacian~$-\Delta$.

Finally, let us notice that the spectrum of $(-i\nabla+A_\alpha)^2$
equals the semiaxis $[0,\infty)$ for every real~$\alpha$.

\subsection{The improved Hardy inequality}
Let $\psi \in C_0^\infty(\Real^d\setminus\{\rho=0\})$,
a core of~$Q_\alpha$.
Employing the polar coordinates in the $(x,y)$-plane
as in~\eqref{partial.wave} and writing 
$
  \phi(\rho,\varphi,z) =: \psi(\rho\cos\varphi,\rho\sin\varphi,z)
$,
we have 
\begin{equation}\label{angular.estimate} 
\begin{aligned}
  Q_\alpha[\psi]
  &= \int_{\Real^{d-2}} \int_0^{2\pi} \int_0^\infty 
  \left(
  |\partial_\rho\phi|^2 
  + \frac{|\partial_\varphi\phi+i\alpha\phi|^2}{\rho^2}
  + |\nabla_{\!z}\phi|^2
  \right)
  \rho \, \der\rho \, \der\varphi \, \der z
  \\
  &\geq 
  \int_{\Real^{d-2}} \int_0^{2\pi} \int_0^\infty 
  \left(
  |\partial_\rho\phi|^2 
  + \frac{\dist(\alpha,\Int)^2}{\rho^2} |\phi|^2
  + |\nabla_{\!z}\phi|^2
  \right)
  \rho \, \der\rho \, \der\varphi \, \der z
  \,,
\end{aligned}
\end{equation}
where we omit to specify the arguments of~$\phi$
and abuse a bit the notation for~$\rho$. 
This inequality explains the quantity 
on the right-hand side of~\eqref{main.Ari}.
To obtain the dimensional term on the left-hand side of~\eqref{main.Ari},
we write
$$
  \phi(\rho,\varphi,z) = f(\rho,\varphi,z) \, 
  (\rho^2 + |z|^2)^{-(d-2)/4}
  \,,
$$
which is in fact the definition of the new test function~$f$.
Notice that $f(0,\varphi,z)=0$ for all $\varphi \in [0,2\pi)$
and $z \in \Real^{d-2}$.
A straightforward computation employing 
an integration by parts yields
\begin{equation}\label{radial.estimate} 
\begin{aligned}
  \int_{\Real^{d-2}}  \int_0^\infty 
  \left(
  |\partial_\rho\phi|^2 
  + |\nabla_{\!z}\phi|^2
  \right)
  \rho \, \der\rho \, \der z
  = \ & \left(\frac{d-2}{2}\right)^2  
  \int_{\Real^{d-2}}  \int_0^\infty 
  \frac{|\phi|^2}{\rho^2+|z|^2} \, \rho \, \der\rho \, \der z
  \\
  & \ + \int_{\Real^{d-2}}  \int_0^\infty 
  \left(
  |\partial_\rho f|^2 
  + |\nabla_{\!z} f|^2
  \right)
  \, (\rho^2 + |z|^2)^{-(d-2)/2} \, \rho \, \der\rho \, \der z
  \\
  \geq \ & 
  \left(\frac{d-2}{2}\right)^2  
  \int_{\Real^{d-2}}  \int_0^\infty 
  \frac{|\phi|^2}{\rho^2+|z|^2} \, \rho \, \der\rho \, \der z
  \,.
\end{aligned}
\end{equation}
Estimates~\eqref{angular.estimate} and~\eqref{radial.estimate} 
yield~\eqref{main.Ari}, after coming back to the Cartesian
coordinates and noticing that $\rho^2+|z|^2 = r^2$.
This concludes the proof of Theorem~\ref{Thm.Ari}.

The present proof also explains 
why the inequality~\eqref{main.Ari} is optimal.
Indeed, the inequality~\eqref{angular.estimate} is sharp
in the sense that it is achieved by any function 
of the form $\phi(\rho,\varphi,z) = g(\rho,z) e^{im\varphi}$,
where $m\in\Int$ is chosen in such a way 
that it minimises the distance $\dist(\alpha,\Int)$
(so that $e^{im\varphi}$ is an eigenfunction corresponding 
to the lowest eigenvalue $\dist(\alpha,\Int)^2$ 
of the operator $(-i\partial_\varphi + \alpha)^2$).  
The other inequality~\eqref{angular.estimate} is not achieved 
by a non-trivial~$\phi$, but it is also sharp in the following sense.
For any function~$f$ depending only on~$r$,
we have
$$
\begin{aligned}
  \int_{\Real^{d-2}}  \int_0^\infty 
  \left(
  |\partial_\rho f|^2 
  + |\nabla_{\!z} f|^2
  \right)
  \, (\rho^2 + |z|^2)^{-(d-2)/2} \, \rho \, \der\rho \, \der z
  &= \int_{\Sphere_+^{d-2}}  \int_0^\infty 
  |\partial_r f|^2 
  \, \rho \, \der r \, \der \sigma
  \\
  &\leq 
  |\Sphere_+^{d-2}|  
  \int_0^\infty  |\partial_r f|^2  \, r \, \der r 
  \,,
\end{aligned}
$$
where $\Sphere_+^{d-2} := \Sphere^{d-2} \cap \{\rho>0\}$
and $\Sphere^{d-2}$ is the unit sphere in the $(\rho,z)$-half-space.  
It is well known that there exists a sequence of functions 
$\{f_n\}_{n=1}^\infty \subset C_0^\infty((0,\infty))$
such that 
$$
  f_n(r) \xrightarrow[n\to\infty]{} 1
  \mbox{ pointwise}
  \qquad \mbox{and} \qquad
  \int_0^\infty  |\partial_r f_n(r)|^2  \, r \, \der r 
  \xrightarrow[n\to\infty]{} 0
  \,,
$$
for the integral corresponds to the radial part
of the two-dimensional Laplacian.

\section{The confining field}\label{Sec.toy}
%
The organisation of this section dealing with 
the vector potential~\eqref{model} is as follows.
In Section~\ref{Sec.pre} we rigorously introduce
the corresponding magnetic Laplacian~$-\Delta_\beta$ 
as a self-adjoint operator in~$\sii(\Real^3)$
and state its basic spectral properties.
The elementary part of Theorem~\ref{Thm.main}
(\ie~just the first term on the right-hand side of~\eqref{main})
is established in Section~\ref{Sec.elementary}.
The complete proof of Theorem~\ref{Thm.main}
is given in the remaining 
Sections~\ref{Sec.proof1} and~\ref{Sec.proof2}.

\subsection{Preliminaries}\label{Sec.pre}
For every $\beta \in \Real$,
we introduce the magnetic Laplacian~$-\Delta_\beta$ 
the self-adjoint non-negative operator in $\sii(\Real^3)$
associated with the quadratic form
\begin{equation*}
  Q_\beta[\psi] := \|(\nabla+iA_\beta)\psi\|^2 
  \,, \qquad
  \Dom(Q_\beta) := \overline{C_0^\infty(\Real^3\setminus\{z=0\})}^{\vertiii{\cdot}}
  \,,
\end{equation*}
where~$\|\cdot\|$ denotes the usual norm of $\sii(\Real^3)$ and
\begin{equation*}
  \vertiii{\psi}
  := \sqrt{\displaystyle \|(\nabla+iA_\beta)\psi\|^2 + \|\psi\|^2 } 
  \,.
\end{equation*}
By the diamagnetic inequality, we have the form domain inclusion
$
  \Dom(Q_\beta) \subset W_0^{1,2}(\Real^3 \setminus \{z=0\})  
$.
Using the special structure~\eqref{model} of the potential~$A_\beta$, 
we have
\begin{equation*}
  Q_\beta[\psi] =
  \int_{\Real^3}
  \left(
  \left|\left(-i\partial_x +\frac{\beta}{z^2} y\right)\psi\right|^2
  + |\partial_y \psi|^2 + |\partial_z \psi|^2
  \right)
  \der x \, \der y \, \der z
\end{equation*}
and, in the sense of distributions, 
\begin{equation}\label{sense}
  -\Delta_\beta = \Big(-i\partial_x+\frac{\beta}{z^2} y\Big)^2 
  -\partial_y^2 -\partial_z^2
  = -\Delta -	 2 i \frac{\beta}{z^2} y \, \partial_x + \frac{\beta^2}{z^4} y^2
  \,,
\end{equation}
where $-\Delta$ is the usual (distributional) Laplacian 
in the $(x,y,z)$ variables.

Notice that $-\Delta_0$ (\ie~$\beta=0$)
is just the Dirichlet Laplacian in $\Real^3\setminus\{z=0\}$
for which the form domain equality 
$
  \Dom(Q_0) = W_0^{1,2}(\Real^3 \setminus \{z=0\})  
$
holds.
The spectrum of~$-\Delta_0$ is well known,
$\sigma(-\Delta_0) = [0,\infty)$.
%
%
Moreover, using the classical one-dimensional Hardy inequality
\begin{equation}\label{Hardy.1D}
  \forall u \in W_0^{1,2}(\Real\setminus\{0\})
  \,, \qquad
  \int_\Real |u'(z)|^2 \, \der z
  \geq \frac{1}{4} 
  \int_\Real \frac{|u(z)|^2}{z^2} \, \der z
  \,,
\end{equation}
and Fubini's theorem, it follows that~$-\Delta_0$ is subcritical
in the sense that the three-dimensional Hardy inequality 
$$
  \forall \psi \in W_0^{1,2}(\Real^3\setminus\{z=0\})
  \,, \qquad
  \int_{\Real^3} |\nabla\psi(x,y,z)|^2 \, \der x \, \der y \, \der z
  \geq \frac{1}{4} 
  \int_{\Real^3} \frac{|\psi(x,y,z)|^2}{z^2} \, \der x \, \der y \, \der z
  \,,
$$
holds. This is the precise statement of~\eqref{Hardy.zero}. 

It is not difficult to see that the spectrum of~$-\Delta_\beta$
coincides with the spectrum of the unperturbed operator~$-\Delta_0$
(as well as of the usual Laplacian without the extra Dirichlet condition).

\begin{Proposition}
For every $\beta \in \Real$, one has
$$
  \sigma(-\Delta_\beta) = [0,\infty) 
  \,.
$$
\end{Proposition}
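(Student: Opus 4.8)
The plan is to prove the two inclusions $\sigma(-\Delta_\beta) \supseteq [0,\infty)$ and $\sigma(-\Delta_\beta) \subseteq [0,\infty)$ separately. The second inclusion is immediate: the operator $-\Delta_\beta$ is non-negative by construction, since its quadratic form $Q_\beta[\psi] = \|(\nabla+iA_\beta)\psi\|^2$ is manifestly non-negative, so its spectrum is contained in $[0,\infty)$. Thus the entire content of the Proposition lies in showing that every $\lambda \geq 0$ belongs to the spectrum.

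For the inclusion $[0,\infty) \subseteq \sigma(-\Delta_\beta)$, I would use a Weyl-sequence (singular-sequence) argument. The idea is to exploit the fact that the magnetic potential $A_\beta$ from~\eqref{model} decays at spatial infinity away from the singular plane $\{z=0\}$: for any fixed $\lambda \geq 0$, I would construct an approximate eigenfunction by taking a plane-wave-type profile modulated by a cutoff supported far from the origin and away from $\{z=0\}$. Concretely, for a chosen direction and wavenumber realising energy $\lambda$, I would localise in a region where $|z|$ is large and bounded below, so that the coefficient $\beta/z^2$ and hence the whole perturbation $-2i(\beta/z^2)y\,\partial_x + (\beta^2/z^4)y^2$ appearing in~\eqref{sense} becomes negligible. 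The sequence should have unit norm, be non-compact (so that no subsequence converges, ensuring we detect essential rather than discrete spectrum), and satisfy $\|(-\Delta_\beta - \lambda)\psi_n\| \to 0$.

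The key steps, in order, are as follows. First I would fix $\lambda \geq 0$ and pick a reference point $(x_0,y_0,z_0)$ with $z_0 \neq 0$ and $|z_0|$ large, together with a wavevector $k \in \Real^3$ with $|k|^2 = \lambda$. Second, I would form $\psi_n(x,y,z) := \chi_n(x,y,z)\,e^{i k \cdot (x,y,z)}$, where $\chi_n$ is a smooth cutoff supported in a ball of radius comparable to $n$ centred at a point translated to infinity (for instance along the $x$-direction, so that $z$ stays large and bounded away from zero throughout the support), normalised so that $\|\psi_n\|=1$. Third, I would apply $-\Delta_\beta - \lambda$ using the distributional expression in~\eqref{sense} and split the result into terms where the Laplacian hits the cutoff (these are $O(1/n)$ by the usual spreading estimate) and terms involving the magnetic perturbation. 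The crucial point is that on the support of $\psi_n$ the factor $1/z^2$ is uniformly small, so $\|2i(\beta/z^2)y\,\partial_x\psi_n\|$ and $\|(\beta^2/z^4)y^2\psi_n\|$ both tend to zero as the support is pushed to large $|z|$. Fourth, I would note that the $\psi_n$ can be chosen with mutually disjoint supports (hence orthonormal, so weakly null), which upgrades the conclusion from $\lambda \in \sigma(-\Delta_\beta)$ to $\lambda \in \sigma_{\mathrm{ess}}(-\Delta_\beta)$.

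The main obstacle is controlling the magnetic perturbation terms simultaneously with the normalisation and the requirement $z \neq 0$ on the support. One must arrange the translation so that $|z|$ grows (killing the $1/z^2$ coefficients) while the factor $y^2$ in $(\beta^2/z^4)y^2$ does not grow faster; translating along $x$ rather than $y$ keeps $y$ bounded on the support and makes this balance automatic, so the perturbation is uniformly $O(|z|^{-2}) \to 0$. A secondary subtlety is that the approximate eigenfunctions must lie in the form domain $\Dom(Q_\beta)$, \ie\ vanish near $\{z=0\}$; this is guaranteed since the supports are chosen with $|z|$ bounded below, so the $\psi_n$ already belong to $C_0^\infty(\Real^3\setminus\{z=0\})$, the core of $Q_\beta$. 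Once these estimates are in place, the Weyl criterion yields $[0,\infty) \subseteq \sigma(-\Delta_\beta)$, which combined with non-negativity completes the proof.
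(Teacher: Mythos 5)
Your proposal is correct and follows essentially the same route as the paper: non-negativity of the quadratic form gives $\sigma(-\Delta_\beta)\subset[0,\infty)$, and the reverse inclusion is obtained via the Weyl criterion with a singular sequence pushed to infinity in a region where the $\beta$-dependent terms of \eqref{sense} become negligible (the paper localises at the infinity of the cone $\{|y|<|z|\}$, which is the precise form of your balance condition). One small caution: since the cutoffs must spread in all three variables (radius comparable to $n$) to kill the kinetic errors from the transverse derivatives, $y$ is not actually bounded on the support but grows like $n$, so you need $|z|\gtrsim n$ there as well --- exactly the cone condition rather than a pure translation along the $x$-axis.
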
  
\begin{proof}
The inclusion $\sigma(-\Delta_\beta) \subset [0,\infty)$
follows trivially because of the non-negativity of $-\Delta_\beta$.
The opposite inclusion $\sigma(-\Delta_\beta) \supset [0,\infty)$
can be established by the Weyl criterion,
by choosing the singular sequence localised at the infinity
of the cone $\{|y| < |z|\}$, 
where the terms in~\eqref{sense} containing~$\beta$ 
can be made arbitrarily small.
We omit the details.
\end{proof}

\subsection{The elementary Hardy inequality}\label{Sec.elementary}
Now we turn to the elementary part of the Hardy inequality~\eqref{main}.
\begin{Proposition}\label{Prop.elementary}
For every $\beta \in \Real$, one has
\begin{equation}\label{elementary}
  \forall \psi \in \Dom(Q_\beta)
  \,, \qquad
  Q_\beta[\psi]
  \geq \left(\frac{1}{4} + |\beta|\right) 
  \int_{\Real^3} \frac{|\psi(x,y,z)|^2}{z^2} \, \der x \, \der y \, \der z
  \,.
\end{equation}
\end{Proposition}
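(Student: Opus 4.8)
The plan is to prove the lower bound $\left(\tfrac14 + |\beta|\right) z^{-2}$ by combining the one-dimensional Hardy inequality~\eqref{Hardy.1D} with a cross-term coming from the magnetic momentum in the $x$-direction. Working on the core $C_0^\infty(\Real^3\setminus\{z=0\})$, I expand the quadratic form using the explicit expression already displayed,
\begin{equation*}
  Q_\beta[\psi] =
  \int_{\Real^3}
  \left(
  \left|\left(-i\partial_x +\tfrac{\beta}{z^2} y\right)\psi\right|^2
  + |\partial_y \psi|^2 + |\partial_z \psi|^2
  \right)
  \der x \, \der y \, \der z
  \,.
\end{equation*}
The first term expands into $|\partial_x\psi|^2 + \tfrac{\beta^2}{z^4}y^2|\psi|^2 + \tfrac{2\beta}{z^2}y\,\mathrm{Im}(\bar\psi\,\partial_x\psi)$. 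Discarding the manifestly non-negative terms $|\partial_x\psi|^2$, $|\partial_y\psi|^2$ and $\tfrac{\beta^2}{z^4}y^2|\psi|^2$, and applying~\eqref{Hardy.1D} (via Fubini, integrating in $z$ for fixed $x,y$) to the remaining $|\partial_z\psi|^2$, I obtain
\begin{equation*}
  Q_\beta[\psi] \geq \frac14\int_{\Real^3}\frac{|\psi|^2}{z^2}\,\der x\,\der y\,\der z
  + 2\beta\int_{\Real^3}\frac{y}{z^2}\,\mathrm{Im}(\bar\psi\,\partial_x\psi)\,\der x\,\der y\,\der z
  \,.
\end{equation*}

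The key step is to control the cross-term from below by $(|\beta|-\tfrac14)\int z^{-2}|\psi|^2$, i.e.\ to recover an extra $|\beta|\int z^{-2}|\psi|^2$ while only spending what is already absorbed. The natural device is to integrate by parts in $x$ against a suitable antisymmetric structure, or more directly to bound the cross-term via a weighted Cauchy--Schwarz inequality against the discarded non-negative terms rather than discarding them outright. Concretely, I would \emph{not} throw away $|\partial_x\psi|^2$ and $\tfrac{\beta^2}{z^4}y^2|\psi|^2$; instead I keep them and complete a square in the magnetic direction. Since $\left|(-i\partial_x+\tfrac{\beta}{z^2}y)\psi\right|^2\ge 0$ pointwise, an alternative and cleaner route is to test the non-negativity of this squared term against $\mathrm{sgn}(\beta)$ and use that $\pm 2\beta \tfrac{y}{z^2}\mathrm{Im}(\bar\psi\partial_x\psi) \le |\partial_x\psi|^2 + \tfrac{\beta^2 y^2}{z^4}|\psi|^2$ is the wrong direction; so the efficient argument is to integrate the cross-term by parts, writing $y\,\mathrm{Im}(\bar\psi\partial_x\psi) = \tfrac12 y\,\partial_x(\text{something purely real?})$—which vanishes—forcing me instead to pair the $x$-derivative with the $y$-weight through the commutator.

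The mechanism I expect to work is the following pointwise gauge-type identity: since the operator $-i\partial_x + \tfrac{\beta}{z^2}y$ commutes with $-i\partial_y$ only up to the term $\tfrac{\beta}{z^2}$, one has a genuine magnetic field $B = \partial_y A_x - \partial_x A_y = \tfrac{\beta}{z^2}$ in the $(x,y)$-plane (for fixed $z$), and the Laptev--Weidl type two-dimensional argument gives, for fixed $z$,
\begin{equation*}
  \int_{\Real^2}\left(\left|\left(-i\partial_x+\tfrac{\beta}{z^2}y\right)\psi\right|^2 + |\partial_y\psi|^2\right)\der x\,\der y
  \geq \left|\tfrac{\beta}{z^2}\right|\int_{\Real^2}|\psi|^2\,\der x\,\der y
  \,,
\end{equation*}
because the lowest Landau level energy of a constant field of strength $|B|$ equals $|B|$. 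Integrating this in $z$ yields the extra $|\beta|\int z^{-2}|\psi|^2$, and adding the $\tfrac14\int z^{-2}|\psi|^2$ from the intact $|\partial_z\psi|^2$ term via~\eqref{Hardy.1D} produces~\eqref{elementary}. The main obstacle is justifying the Landau-level bound on the slab with $z$ as a frozen parameter while keeping the constant field $B=\beta/z^2$ — this is standard (it follows from $(-i\partial_x+By)^2 + (-i\partial_y)^2 \ge |B|$ by the harmonic-oscillator spectrum, or from the sharp diamagnetic/commutator inequality $\|(\partial_x + iBy)\psi\|^2 + \|\partial_y\psi\|^2 \ge |B|\,\|\psi\|^2$) — and then verifying that the $z$-integration and the separate use of~\eqref{Hardy.1D} on the orthogonal direction can be combined additively, which is immediate since the two bounds act on disjoint pieces of $Q_\beta$ and no cross-terms survive after the field-strength estimate absorbs them.
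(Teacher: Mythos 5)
Your final argument --- freezing $z$, recognising $\bigl(-i\partial_x+\tfrac{\beta}{z^2}y\bigr)^2-\partial_y^2$ as a Landau Hamiltonian with constant field $\beta/z^2$ whose lowest level is $|\beta|/z^2$, and adding the one-dimensional Hardy bound~\eqref{Hardy.1D} for the untouched $|\partial_z\psi|^2$ term --- is exactly the paper's proof (and your parenthetical commutator justification $\|(\partial_x+iBy)\psi\|^2+\|\partial_y\psi\|^2\ge|B|\,\|\psi\|^2$ is precisely the alternative proof given in the paper's Remark). The meandering first half about discarding terms and salvaging the cross-term is a dead end, but you correctly abandon it, and the argument you settle on is sound and essentially identical to the paper's.
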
  
\begin{proof}
Since the result will be re-proved in the following subsection,
here we provide just a sketchy proof.

Writing
\begin{equation}\label{Landau}
  -\Delta_\beta = 
  \underbrace{\left(-i\partial_x+\frac{\beta}{z^2} y\right)^2 
  - \partial_y^2}_{-\Delta_\beta'} 
  - \partial_z^2
  \,,
\end{equation}
we notice that~$-\Delta_\beta'$ is the magnetic Laplacian in $\sii(\Real^2)$
corresponding to the two-dimensional vector potential
\begin{equation*} 
  A_\beta'(x,y) := \frac{\beta}{z^2} \, (y,0) 
  \,,
\end{equation*}
which depends parametrically on~$z$ (and~$\beta$).	
The corresponding two-dimensional magnetic field is constant
\begin{equation}\label{constant} 
  B_\beta'(x,y) := \rot A_\beta'(x,y) = -\frac{\beta}{z^2}  
  \,.
\end{equation}
The operator~$-\Delta_\beta'$ is the celebrated Landau Hamiltonian.

The spectral problem for~$-\Delta_\beta'$ is explicitly solvable.
The easiest way how to see it is to 
perform a partial Fourier transform with respect to the $x$-variable,
which yields a unitary equivalence  
\begin{equation}\label{shifted} 
  -\Delta_\beta' \cong
  \left(\xi+\frac{\beta}{z^2} y\right)^2
  - \partial_y^2
  \,,
\end{equation}
where $\xi\in\Real$ is the dual variable to~$x$.
Noticing that the right-hand side of~\eqref{shifted} 
is the Hamiltonian of a shifted harmonic oscillator 
(the shift can be handled as yet another unitary transform),
we get the familiar formula 
(the natural numbers~$\Nat$ contain zero in our convention)
\begin{equation*} 
  \sigma(-\Delta_\beta') =
  \frac{2|\beta|}{z^2} \left(\Nat+\frac{1}{2}\right)
  \,, \qquad
  \beta \not= 0
  \,.
\end{equation*}
Each point in the spectrum is an eigenvalue of infinite multiplicity
(Landau levels).
(If $\beta=0$, then $\sigma(-\Delta_\beta') = [0,\infty)$.)
In particular,
\begin{equation*} 
  \inf\sigma(\Delta_\beta') = \frac{|\beta|}{z^2}  
\end{equation*}
(which is trivially valid also for $\beta=0$).

Using the last result in~\eqref{Landau}, we get
\begin{equation}\label{crude} 
  -\Delta_\beta \geq -\partial_z^2 +  \frac{|\beta|}{z^2} 
  \geq \frac{1}{4 z^2} + \frac{|\beta|}{z^2} 
  \,,
\end{equation}
which is the desired result~\eqref{elementary}.
The second estimate in~\eqref{crude} follows 
from the classical Hardy inequality~\eqref{Hardy.1D},
by noticing that the form core consists of functions that vanish
on the plane $\{z=0\}$.
\end{proof}
\begin{Remark}
Proposition~\ref{Prop.elementary} can be alternatively proved also 
by a standard commutator trick 
(see, \eg, \cite[Sec.~2.4]{Balinsky-Laptev-Sobolev_2004}).
Let us denote by $\Pi_j := -i\partial_j + (A_\beta)_j$ 
with $j \in \{1,2,3\} \cong \{x,y,z\}$
the $j^\mathrm{th}$ component of the magnetic gradient.
Then one has the identity 
\begin{equation}\label{commutator} 
  \forall C_0^\infty(\Real^3 \setminus \{z=0\})
  \,, \qquad
  \|\Pi_j\psi\|^2 + \|\Pi_k\psi\|^2 
  = \|(\Pi_j \pm i \Pi_k)\psi\|^2 
  \pm \langle\psi,(B_\beta)_{jk}\psi\rangle 
  \,,
\end{equation}
for any pair $j,k \in \{1,2,3\}$,
where $ (B_\beta)_{jk} := \partial_j (A_\beta)_k - \partial_k (A_\beta)_j$
are the coefficients of the magnetic tensor $B_\beta := \der A_\beta$
and $\langle\cdot,\cdot\rangle$ denotes the inner product of $\sii(\Real^3)$.
In our case~\eqref{model}, we have
$$
  B_\beta = \beta
  \begin{pmatrix}
    0 & -1/z^2 & 2y/z^3 \\
    1/z^2  & 0 & 0 \\
    -2y/z^3  & 0 & 0  
  \end{pmatrix}
  \,.
$$
Using the formula~\eqref{commutator} with $(j,k) := (1,2)$,
one therefore obtains, 
for every $\psi \in C_0^\infty(\Real^3 \setminus \{z=0\})$,
$$
  Q_\beta[\psi] = \|\Pi_1\psi\|^2 + \|\Pi_2\psi\|^2 + \|\Pi_3\psi\|^2 
  \geq \pm \beta \left\| \frac{\psi}{z} \right\|^2 + \|\partial_z\psi\|^2 
  \geq \left( \pm \beta + \frac{1}{4} \right) 
  \left\| \frac{\psi}{z} \right\|^2
  \,,
$$ 
where the last inequality is due 
to the classical Hardy inequality~\eqref{Hardy.1D}.
Since the obtained result holds with either the plus or minus sign,
we arrive at~\eqref{elementary} for every 
$\psi \in C_0^\infty(\Real^3 \setminus \{z=0\})$.
By density, the result extends to all $\psi \in \Dom(Q_\beta)$.
\end{Remark}

\subsection{Quantification of the elementary Hardy inequality}
\label{Sec.proof1}
Our next goal is to show that~\eqref{elementary} can still be improved. 
To do so, we have to employ the terms we neglected 
in the crude estimates~\eqref{crude}.

Let $\psi \in C_0^\infty(\Real^3\setminus\{z=0\})$,
a core of~$Q_\beta$.
The function is implicitly assumed to depend on
the space variables $(x,y,z) \in \Real^3$
and for brevity we omit to specify the arguments in the integrals below. 

First of all, let us perform the partial Fourier transform 
with respect to the $x$-variable as in~\eqref{shifted}:
\begin{equation}\label{form1}
  Q_\beta[\psi] =
  \int_{\Real^3}
  \left(
  \left|\left(\xi +\frac{\beta}{z^2} y\right)\hat\psi\right|^2
  + |\partial_y \hat\psi|^2 + |\partial_z \hat\psi|^2
  \right)
  \der \xi \, \der y \, \der z
  \,.
\end{equation}
Notice that the transformed function 
$\hat\psi = \hat\psi(\xi,y,z)$ 
still vanishes in a neighbourhood of $\{z=0\}$.
 
In the second step, we make the change of test function
\begin{equation}\label{change2} 
  \hat\psi(\xi,y,z) = \sqrt{|z|} \, \phi(\xi,y,z) 
\end{equation}
to single out the first term on the right-hand side of~\eqref{elementary}.
Putting~\eqref{change2} into~\eqref{form1} and integrating by parts
with respect to~$z$,
we arrive at 

\begin{equation}\label{form2}
  Q_\beta[\psi] 
  - \frac{1}{4} 
  \left\|\frac{\psi}{z}\right\|^2
  =
  \int_{\Real^3}
  \left(
  \left|\left(\xi +\frac{\beta}{z^2} y\right)\phi\right|^2
  + |\partial_y \phi|^2 + |\partial_z \phi|^2
  \right)
  |z| \, \der \xi \, \der y \, \der z
  \,.
\end{equation}

From now on, let us assume that~$\beta$ is non-zero. 
Then 
\begin{equation*}
  \eta(\xi,y,z) := \exp\left(-\frac{|\beta|}{2z^2}(y-y_0)^2\right) 
  \qquad \mbox{with} \qquad
  y_0 := -\frac{z^2 \xi}{\beta}
\end{equation*}
is an eigenfunction of the operator
on the right-hand side of~\eqref{shifted} corresponding to 
the lowest eigenvalue.
In this two-dimensional context, 
the variable~$z$ is understood as a parameter
and~$\xi$ gives rise to the degeneracies. 
In the third step, we make the change of test function
\begin{equation}\label{change3} 
  \phi(\xi,y,z) =  \eta(\xi,y,z) \, \varphi(\xi,y,z) 
\end{equation}
to single out the second term on the right-hand side of~\eqref{elementary}.
Putting~\eqref{change3} into~\eqref{form2} and integrating by parts
with respect to~$y$,
we arrive at 
\begin{equation}\label{form3}
  Q_\beta[\psi] 
  - \left(\frac{1}{4} + |\beta|\right) 
  \left\|\frac{\psi}{z}\right\|^2
  =
  \int_{\Real^3}
  \left(
  |\partial_y \varphi|^2 + |\partial_z \phi|^2
  \right)
  \eta^2(\xi,y,z) \, |z| \, \der \xi \, \der y \, \der z
  \,.
\end{equation}

Since the right-hand side of~\eqref{form3} is non-negative,
we have just re-proved~\eqref{elementary}.

\begin{Remark}
If the operator associated with the form 
on the right-hand side of~\eqref{form3} had compact resolvent
(which is not true), we would immediately arrive at 
a (local) improved Hardy inequality (\cf~\cite{K6}).
In our case, however, we have to proceed more carefully.
\end{Remark}

\subsection{A test-function argument}\label{Sec.proof2}
Having~\eqref{form3} at our disposal, the next step
is to use the contribution of the term containing~$|\partial_z\phi|^2$.
First of all, notice that
\begin{equation*} 
  \partial_z\phi = \partial_z\varphi 
  + \varphi \, \frac{|\beta|}{z^3} (y^2-y_0^2)
  \,.
\end{equation*}
The main trick is to pick an arbitrary function $f:\Real^3 \to \Real$
and write
\begin{equation*} 
\begin{aligned}
  |\partial_z\phi|^2 
  &= \left| 
  \partial_z\varphi - f \varphi
  + \varphi \left(
  \frac{|\beta|}{z^3} (y^2-y_0^2) + f 
  \right)
  \right|^2
  \\
  &= \left| 
  \partial_z\varphi - f \varphi
  \right|^2
  + |\varphi|^2 
  \left(
  \frac{|\beta|}{z^3} (y^2-y_0^2) + f 
  \right)^2
  +2 \Re \left[ 
  (\partial_z\bar\varphi - f \bar\varphi)
  \varphi \left(
  \frac{|\beta|}{z^3} (y^2-y_0^2) + f 
  \right)
  \right]
  \\
  &= \left| 
  \partial_z\varphi - f \varphi
  \right|^2
  + |\varphi|^2 
  \left(
  \frac{\beta^2}{z^6} (y^2-y_0^2)^2 - f^2 
  \right)
  + (\partial_z |\varphi|^2) 
  \left(
  \frac{|\beta|}{z^3} (y^2-y_0^2) + f
  \right)
  \,.
\end{aligned}
\end{equation*}
Putting this expression into~\eqref{form3} and integrating by parts
with respect to~$z$,
we arrive at 
\begin{equation}\label{form4}
  Q_\beta[\psi] 
  - \left(\frac{1}{4} + |\beta|\right) 
  \left\|\frac{\psi}{z}\right\|^2
  =
  \int_{\Real^3}
  \left(
  |\partial_y \varphi|^2 + |\partial_z \varphi - f\varphi|^2
  + V_f |\varphi|^2
  \right)
  \eta^2(\xi,y,z) \, |z| \, \der \xi \, \der y \, \der z
  \,,
\end{equation}
where
\begin{equation}\label{potential}
  V_f(\xi,y,z) :=
  -\partial_z f - \frac{1}{z} f - 2 \frac{|\beta|}{z^3} (y^2-y_0^2) f
  - \frac{\beta^2}{z^6} (y^2-y_0^2)^2
  + 2 \frac{|\beta|}{z^4} (y^2+y_0^2)
  \,.
\end{equation}
Now we may play with many possible choices of~$f$ 
to try to make the potential~$V$ positive
and employ the following consequence of~\eqref{form4}: 
\begin{equation*}
  Q_\beta[\psi] 
  - \left(\frac{1}{4} + |\beta|\right) 
  \left\|\frac{\psi}{z}\right\|^2
  \geq
  \int_{\Real^3}
  V_f \, |\hat\psi|^2 \, \der \xi \, \der y \, \der z
  \,.
\end{equation*}

In particular, the special choice
\begin{equation}\label{choice} 
  f(\xi,y,z) := \frac{1}{2} \frac{|\beta|}{z^3} (y_0^2-y^2)
\end{equation}
leads to a spectacularly simple expression
\begin{equation}\label{both} 
  V_f(\xi,y,z) =
  \frac{|\beta|}{z^4} (y^2+y_0^2)
  \,.
\end{equation}
Neglecting the second term on the right-hand side of~\eqref{both}
and performing the inverse
Fourier transform in the $\xi$-variable,
we therefore obtain the following Hardy inequality
\begin{equation}\label{improved} 
  Q_\beta[\psi]
  - \left(\frac{1}{4} + |\beta|\right) 
  \left\|\frac{\psi}{z}\right\|^2
  \geq 
  |\beta| \int_\Real^3 
  \frac{y^2}{z^4} \, |\psi|^2 
  \, \der x \, \der y \, \der z
\end{equation}
which coincides with~\eqref{main} 
after the extension of
$\psi \in C_0^\infty(\Real^3\setminus\{z=0\})$
to all $\Dom(Q_\beta)$.
This concludes the proof of Theorem~\ref{Thm.main}.
\hfill\qed	

\begin{Remark}
Keeping both terms on the right-hand side of~\eqref{both}
and performing the inverse
Fourier transform in the $\xi$-variable,
we have established an interesting operator inequality
\begin{equation*}
  -\Delta_\beta  
  \geq 
  \frac{-\partial_x^2}{|\beta|} + 
  \left(
  \frac{1}{4} + |\beta| + |\beta| \, \frac{y^2}{z^2}
  \right) 
  \frac{1}{z^2} 
  \,.
\end{equation*}
\end{Remark}
%

\subsection*{Acknowledgment}
%
The research of D.K.\ was partially supported 
by the GACR grant No.\ 18-08835S
and by FCT (Portugal) through project PTDC/MAT-CAL/\-4334/\-2014.
The research of L.V.\ was partially supported by
ERCEA Advanced Grant 669689-HADE, MTM2014-53145-P, and IT641-13. 

%
\bibliography{bib}
\bibliographystyle{amsplain}

\end{document}